\begin{document}
\providecommand{\keywords}[1]{\textbf{\textit{Keywords: }} #1}
\newtheorem{theorem}{Theorem}[section]
\newtheorem{lemma}[theorem]{Lemma}
\newtheorem{proposition}[theorem]{Proposition}
\newtheorem{corollary}[theorem]{Corollary}
\theoremstyle{definition}
\newtheorem{definition}{Definition}[section]
\theoremstyle{remark}
\newtheorem{remark}{Remark}
\newtheorem{conjecture}{Conjecture}
\newtheorem{question}{Question}

\def\p{\mathfrak{p}}
\def\q{\mathfrak{q}}
\def\s{\mathfrak{S}}
\def\Gal{\mathrm{Gal}}
\def\Ker{\mathrm{Ker}}
\def\Coker{\mathrm{Coker}}
\newcommand{\cc}{{\mathbb{C}}}   
\newcommand{\ff}{{\mathbb{F}}}  
\newcommand{\nn}{{\mathbb{N}}}   
\newcommand{\qq}{{\mathbb{Q}}}  
\newcommand{\rr}{{\mathbb{R}}}   
\newcommand{\zz}{{\mathbb{Z}}}

\title[Unramified extensions with certain perfect Galois groups]{Unramified extensions of quadratic number fields with Galois group $SL_2(7)$}
\author{Joachim K\"onig}
\address{Korea National University of Education, Department of Mathematics Education, 28173 Cheongju, South Korea}

\begin{abstract} We provide an infinite family of quadratic number fields with everywhere unramified Galois extensions of Galois group $SL_2(7)$. To my knowledge, this is the first instance of infinitely many such realizations for a perfect group which is not generated by involutions, a property which makes it difficult to approach for the problem in question and leads to somewhat delicate local-global problems in inverse Galois theory. \end{abstract}
\keywords{Inverse Galois theory; Unramified Galois extensions; Embedding problems; Computational number theory.}
\subjclass[2010]{12F12; 11R21.}

\maketitle

\section{Introduction and main results}

This paper is concerned with the ``unramified inverse Galois problem", more concretely, the realization of certain prescribed groups as Galois groups of unramified Galois extensions over certain (small) number fields $K$. One reason for the interest in such extensions is their algebraic-topological relevance: the Galois groups of finite unramified Galois extensions of $K$ are exactly the finite quotients of the \'etale fundamental group $\pi_1^{et}(\textrm{Spec}(O_K))$ of the spectrum of its ring of integers (cf., e.g., \cite[Chapter X]{NSW}). 
Since $\mathbb{Q}$ is well-known to have no nontrivial unramified extensions by the Hermite-Minkowski theorem, the smallest fields of interest are quadratic number fields, and indeed, there is a wide variety of open problems here. Most famously, the Cohen-Lenstra heuristics on class groups \cite{CL84} predict the asymptotic distribution of quadratic number fields possessing an unramified extension with a given {\it abelian} Galois group.

For general finite groups $G$, the following is a folklore conjecture.
\begin{conjecture}
\label{conj:1}
For any finite group $G$, there exist infinitely many quadratic number fields $K$ such that $K$ possesses a Galois extension with Galois group $G$ unramified at all (finite and infinite) primes of $K$. \end{conjecture}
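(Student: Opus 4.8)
The plan is to attack Conjecture~\ref{conj:1} through regular Galois realizations combined with a quadratic \emph{absorption} of ramification, reducing the genuinely hard groups to a central embedding-problem step. First I would fix a regular Galois extension $\hat F/\qq(t)$ with group $G$ (that is, assume the regular inverse Galois problem for $G$); this input is what confines the argument to groups known to be realizable, though that class is enormous. After a coordinate change I would arrange all branch points to be $\qq$-rational with prescribed inertia generators $g_1,\dots,g_r$ (the branch cycles), which by Riemann existence generate $G$. For a rational specialization $t\mapsto t_0$, good reduction of covers shows the specialized field $L_{t_0}/\qq$ is unramified outside a controlled finite set: the primes of bad reduction and those $p$ at which $t_0$ collides with a branch point modulo $p$, where the inertia is tamely cyclic, generated by a power of some $g_i$. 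A Hilbert-irreducibility count then produces infinitely many $t_0$ for which $\Gal(L_{t_0}/\qq)\cong G$ and the ramification is tame with understood inertia.

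Next comes the \emph{absorption} step. I pass to $K=\qq(\sqrt d)$ and demand that $L_{t_0}K/K$ be everywhere unramified. Writing $\Gal(L_{t_0}K/\qq)\cong G\times C_2$, unramifiedness over $K$ at $p$ is equivalent to the inertia of $G\times C_2$ meeting $G\times\{1\}=\Gal(L_{t_0}K/K)$ trivially; for tame cyclic inertia generated by $(g,\tau)$ this forces the order of $g$ to divide $2$. Hence a prime with involutory inertia is killed by taking $d$ divisible by $p$ and matching the local ramified quadratic subextension, while primes with $g=1$ impose no constraint. This already settles every $G$ generated by involutions: seek a realization all of whose branch cycles are involutions, then choose $d$ absorbing them simultaneously.

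The main obstacle is precisely the case the abstract singles out. When $G$ is \emph{not} generated by involutions, no regular realization can have every inertia of order $\le 2$, so some branch cycle has order $>2$ and its ramification cannot be absorbed by any quadratic field. To circumvent this I would factor the problem through a central extension $1\to Z\to G\to\bar G\to 1$, first realizing $\bar G$ as an everywhere-unramified extension of a quadratic field and then lifting a $\bar G$-realization to a $G$-realization. One treats this as a central embedding problem and invokes the local--global principle: the obstruction is a class in $H^2(G_{\qq},Z)$, and at each ramified prime the local problem is solvable \emph{without new ramification} exactly when the decomposition group lifts to a subgroup of $G$ of the same order (e.g.\ is contained in a subgroup that splits over $Z$). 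The delicate point, visible in the computational searches underlying the paper, is to choose the specialization so that every ramified decomposition group is of this benign type while the global $H^2$-obstruction stays trivial.

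This joint local--global bookkeeping is where a uniform argument breaks down: for groups not generated by involutions there is an inherent tension between forcing the global group to be all of $G$ and keeping every local lift ramification-preserving, and no method is known to guarantee it for \emph{all} $G$. That is why the conjecture remains open and why the present paper can only secure specific perfect groups such as $SL_2(7)$ and $2.A_7$ by exhibiting explicit families. A complete proof would require a general solvability theorem for these ramification-preserving central embedding problems over infinitely many quadratic base fields, which I expect to be the hardest part.
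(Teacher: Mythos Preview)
The statement you were asked to prove is labeled \emph{Conjecture} in the paper, and the paper makes no claim to prove it; on the contrary, the introduction explicitly says it ``is known to be true only in some special cases'' and then proceeds to add two new groups ($SL_2(7)$ and $2.A_7$) to that list via Theorem~\ref{thm:main}. So there is no ``paper's own proof'' of Conjecture~\ref{conj:1} to compare against.

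Your submission is, appropriately, not a proof either: it is an honest outline of the standard strategy (regular realization, specialization with controlled inertia, quadratic absorption via Abhyankar's lemma for involution-generated groups, and central embedding problems for the rest) together with a correct identification of where the argument breaks down. You yourself write that ``the conjecture remains open'' and that a complete proof ``would require a general solvability theorem for these ramification-preserving central embedding problems,'' which is not available. As a survey of the method and its obstruction this is accurate and matches the paper's own motivational discussion in Sections~1 and~2, but it should not be presented as a proof proposal for the conjecture---it is an explanation of why no proof currently exists. One small inaccuracy: you assume one can always move all branch points to $\qq$-rational positions; in general the branch locus is only $G_\qq$-stable, not pointwise rational, and this matters for the residue-field bookkeeping the paper carries out.
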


This is known to be true only in some special cases, including the case of cyclic groups (from class field theory, cf.\ \cite{AC55}), symmetric and alternating groups (e.g., \cite{Yam70} (which does not consider ramification at infinity) or \cite{Ked} (which does)), and a few other almost simple finite groups (e.g., \cite{KNS}). The common idea for the nonsolvable cases is to carefully specialize suitable function field extensions over $\mathbb{Q}$ (or equivalently, multiparameter polynomials over $\mathbb{Q}$) with prescribed Galois group to obtain extensions of $\mathbb{Q}$ with a desired ramification behavior. 
Further evidence for Conjecture \ref{conj:1} is provided by analogs over global function fields, which have proven fruitful and led to concrete distribution heuristics, in \cite{Wood}, for extensions $L/K$ with $[K:\mathbb{Q}]=2$ and $Gal(L/K)=G$. We will neither present nor use those heuristics in the present paper, although it is useful to keep in mind that the expected distribution differs depending on the concrete Galois group $Gal(\tilde{L}/\mathbb{Q}) \le G\wr C_2$, where $\tilde{L}$ denotes the Galois closure of $L/\mathbb{Q}$.

 Notably, for groups such as $G=S_n$, in order to solve Conjecture \ref{conj:1}, it suffices to obtain Galois extensions $L/\mathbb{Q}$ with group $S_n$ in which all nontrivial inertia groups are generated by transpositions,\footnote{Or more generally, by involutions, although allowing this does not seem to make the problem much easier.} and then base change to suitable quadratic number fields, killing ramification via Abhyankar's lemma; in the above terminology, this corresponds to the case $Gal(\tilde{L}/\mathbb{Q}) = G\times C_2$. 
The same  ``direct" approach can in principle succeed for all groups $G$ generated by involutions, but only for those. In particular, all non-abelian simple groups are generated by involutions (due to the Feit-Thompson theorem). Admittedly, in practice, realizations with involution inertia are not always easy to obtain; in particular, their existence still seems unknown for arbitrary alternating groups, whence those are instead dealt with via the above $S_n$-extensions $L/\mathbb{Q}$, in which $L/L^{A_n}$ is automatically unramified (see, e.g., \cite[Proposition 3.3.15]{JLY}).

In this paper, we wish to progress to nonsolvable groups which are {\it not} generated by involutions, i.e., for which the ``$G\times C_2$" approach cannot possibly be employed. Concretely, we show:

\begin{theorem}
\label{thm:main}
Let $G=SL_2(7)$.  
Then there exist infinitely many quadratic number fields possessing an everywhere unramified Galois extension with Galois group $G$.%
\end{theorem}

Note that $SL_2(7)$ is well-known to occur as Galois groups over quadratic number fields, and even over $\mathbb{Q}$, see, e.g., \cite{Swallow}.
 However, the standard methods for its realization do not yield {\it unramified} extensions in any obvious way. To prove Theorem \ref{thm:main}, we instead ``manipulate", via suitable pullbacks, known function field extensions with Galois group $PGL_2(7)$, with the eventual goal of applying recent results (Theorem \ref{thm:kln}) to enforce certain prescribed behaviors of inertia and decomposition groups in specializations of these function field extensions. Once these are obtained, the final result can be deduced via (suitable) solutions to certain central embedding problems. Since there are explicit polynomials available for the ``starting" extensions in this process, we are eventually able to deduce an {\it explicit} infinite family of quadratic number fields fulfilling the assertion of the theorem, see Theorem \ref{thm:q7}.
 At the same time, it should be noted that the proof of Theorem \ref{thm:main} itself does not rely on the concrete shape of these polynomials, and can be carried out using only theoretical results.

Note also that the analog of Theorem \ref{thm:main} is known for the group $SL_2(3)$ (which, however, is a solvable group, namely a double cover of $A_4$) due to \cite{Serre2} (cf.\ also \cite[Proposition 7]{Yam}).

We also prove the following result applicable to infinitely many groups $SL_2(p)$, although with a weaker conclusion.
\begin{theorem}
\label{thm:main_weak}
Let $p$ be a prime with $p\equiv 3$ mod $4$ and $p\not\equiv -1$ mod $24$. Then there exist infinitely many quadratic number fields possessing an $SL_2(p)$-extension unramified everywhere outside of the prime ideal(s)\footnote{For what it's worth, our proof yields quadratic fields in which $p$ is inert, i.e., extended only by a single prime ideal.} dividing $p$.
\end{theorem}

We give some further motivation for the consideration of the groups $SL_2(p)$, as well as some insights about the difficulties in solving Conjecture \ref{conj:1} for them, in the next section.

\section{Some further motivation}

There are different classes of nonsolvable groups not generated by involutions, all interesting in their own right, and regarding unramified extensions of quadratic number fields, little seems to be known about any of them. Just to give one example, even though explicit families of quadratic number fields are known for each of $G=S_n$ ($n\ge 5$) and $H=C_m$ ($m\ge 3$), considering the same problem for the group $G\times H$ leads to delicate arithmetic questions (namely, about existence of discriminants falling into two classes simultaneously; see also Remark \ref{rem:dirprod}).

We focus instead on a different class of groups, namely {\it quasisimple} groups $G$ not generated by involutions. Here, a group $G$ is called quasisimple if it fits into an exact sequence $1\to Z\to G\to S\to 1$ with $Z=Z(G)$, $S$ a simple group, and $G$ is perfect (i.e., equal to its commutator subgroup). Perfectness is a logical assumption in order to make the problem of identifying unramified $G$-extensions ``disjoint" from the problem of class number divisibility of quadratic fields. Instead, several other challenges arise. Notably, in inverse Galois theory, Galois extensions with such groups are usually obtained via first realizing a suitable quotient group such as $S=G/Z(G)$ and then solving a central embedding problem. This alone is usually a nontrivial problem; see, for example, the realization of covering groups of the alternating and symmetric groups, which was investigated by many authors (\cite{Serre3}, \cite{Vila}, \cite{SchS}, \cite{Mestre} are just some of the relevant papers on this topic).

The additional condition that $G$ should not be generated by involutions means in particular that none of the involutions of $S$ lift to involutions in $G$, implying that ($|Z|$ is even and) the $2$-Sylow group of $G$ does not contain a subgroup isomorphic to $C_2\times C_2$, and hence must be cyclic or generalized quaternion. Since it is well-known that a nonabelian simple group cannot have a cyclic $2$-Sylow subgroup, we are left with the quaternion case. Here, the $2$-Sylow group of $S$ must be (the central quotient of a generalized quaternion group, and hence) dihedral, and it is then known (\cite[Theorem 2]{GW}) that $S$ is either $A_7$ or a group $PSL_2(q)$ with $q\ge 5$ an odd prime power.

Through this consideration (and via the known Schur multipliers of the simple groups above), we have arrived naturally at the set of groups to be considered: they are the double covers $SL_2(q) = 2.PSL_2(q)$ (with $q$ odd) as well as the even degree covering groups $2.A_7$, $6.A_7$ and $6.PSL(2,9) \cong 6.A_6$.

The joint idea to attack these groups for our problem is the fact that they embed as index-$2$ normal subgroups into groups which {\it are} generated by involutions (namely, central extensions of $\Gamma = PGL_2(q)$ and of $\Gamma =S_7$ by $C_2$ (and in two cases, noncentral extensions by $C_6$), respectively). This is analogous to the embedding of $A_n$ into $S_n$ as discussed above, but additional problems arise here. 
Namely, we additionally need to begin with a prescribed ramification behavior in the $\Gamma$-extension, and make sure that the embedding problem with kernel $C_2$ is solvable without introducing extra ramification. We use a function field approach to control the behavior of inertia and decomposition groups under specialization. In this setting, it turns out that the embedding problem usually leads to simultaneous requirements on the ramification index as well as the residue extension at given branch points of the function field extension. E.g., recall once more that the problem for the simple group $A_n$ itself is solved via constructing $S_n$-extensions of $\mathbb{Q}$ in which all non-trivial inertia groups are generated by transpositions, which is achieved via finding degree-$n$ extensions with squarefree discriminant, whose existence is well-known. However in order to ensure that such an $S_n$-extension has no local obstruction to embedding into an extension with Galois group $2.S_n$, one needs to impose conditions on the Frobenius class of (all!) prime divisors of the squarefree values of such discriminants. This is usually difficult to guarantee without relying on open conjectures (cf., e.g., \cite[Section 4]{Ked} for some informal remarks on the difficulty of such problems).

\section{Prerequisites}

\subsection{Local behavior of specialization of function field extensions}
We review some known results about the behaviour of inertia and decomposition groups in specializations of function field extensions.
Since these occur in several predecessor works, we restrict to the most directly relevant results. For further background on function field extensions and their specialization, cf., e.g., \cite{KLN}, Sections 2 and 3.

For a finite Galois extension $N/K(t)$ of function fields and any value $t_0\in \mathbb{P}^1(K)$, we denote by $N_{t_0}/K$ the specialization of $N/K(t)$ at $t\mapsto t_0$, i.e., the residue extension at any place extending $t\mapsto t_0$ in $N$. Recall that $N/K(t)$ is called $K$-regular if $K$ is algebraically closed in $N$. 
The following well-known theorem, cf.\ \cite[Prop.\ 4.2]{Beck}, ties the ramification in specializations of a function field extension to the inertia groups at branch points of that function field extension.

\begin{theorem}
\label{thm:beck}
Let $K$ be a number field and $N/K(t)$ a finite $K$-regular Galois extension with Galois group $G$.
Then there exists a finite set $\mathcal{S}_0$ of primes, depending only on $N/K(t)$, such that the following holds for every prime $\mathfrak{p}$ of $K$ not in $\mathcal{S}_0$:\\
If $t_0\in K$ is not a branch point of $N/K(t)$ then the following condition is necessary for $\mathfrak{p}$ to be ramified in the specialization $N_{t_0}/K$:
 $$e_i:=I_{\mathfrak{p}}(t_0, t_i)>0 \text{ for some (automatically unique, up to algebraic conjugates) branch point $t_i$.}$$
 Here $I_{\mathfrak{p}}(t_0,t_i)$ is the intersection multiplicity of $t_0$ and $t_i$ at the prime $\mathfrak{p}$.
Furthermore, the inertia group of a prime extending $\mathfrak{p}$ in the specialization $N_{t_0}/K$ is then conjugate in $G$ to $\langle\tau^{e_i}\rangle$, where $\tau$ is a generator of an inertia subgroup over the branch point $t\mapsto t_i$ of $K(t)$.
\end{theorem}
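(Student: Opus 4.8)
\emph{Proof idea.} The plan is to reduce the assertion to the local structure of tamely ramified covers together with Abhyankar's lemma, by spreading out $N/K(t)$ over a ring of $S$-integers. First I would fix a finite set $S$ of primes of $K$, depending only on $N/K(t)$, large enough that (i) the normalization $\mathcal Y$ of $\mathbb P^1_{\mathcal O_{K,S}}$ in $N$ is a cover of $\mathbb P^1_{\mathcal O_{K,S}}$ which is finite \'etale away from the Zariski closure $\mathcal B$ of the branch locus and tamely ramified along $\mathcal B$; (ii) $\mathcal B \to \mathrm{Spec}(\mathcal O_{K,S})$ is finite \'etale, so that the geometric branch points remain pairwise distinct modulo every $\mathfrak p \notin S$; and (iii) $S$ contains every prime of residue characteristic dividing $|G|$, which in particular guarantees tameness. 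That such an $S$ exists is the usual spreading-out argument, and we set $\mathcal S_0 := S$.

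Now take $\mathfrak p \notin \mathcal S_0$ and $t_0 \in K$ not a branch point. The value $t_0$ extends to a section $\sigma\colon \mathrm{Spec}(\mathcal O_{K,S'}) \to \mathbb P^1$ for a suitable finite $S' \supseteq S$, and $N_{t_0}/K$, equipped with the integral closure of $\mathcal O_{K,S'}$ in it, is the base change $\sigma^*\mathcal Y$. If $\sigma(\mathfrak p) \notin \mathcal B$ --- equivalently $I_{\mathfrak p}(t_0,t_i) = 0$ for every branch point $t_i$ --- then $\sigma^*\mathcal Y$ is finite \'etale over $\mathcal O_{\mathfrak p}$ in a neighbourhood of $\mathfrak p$, so $\mathfrak p$ is unramified in $N_{t_0}/K$; this is precisely the stated necessary condition for ramification. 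Moreover, since $\mathcal B$ is \'etale over the base, distinct components of $\mathcal B$ stay disjoint modulo $\mathfrak p$, so $\sigma(\mathfrak p)$ can meet at most one of them; this gives the uniqueness (up to algebraic conjugacy) of the branch point $t_i$.

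It remains to determine the inertia group when $e_i := I_{\mathfrak p}(t_0,t_i) > 0$ for the (now unique) branch point $t_i$. I would work in the complete local ring of $\mathbb P^1_{\mathcal O_{K,S}}$ at $\sigma(\mathfrak p)$: since $\mathcal B$ is \'etale over the base there, it admits a regular system of parameters $(\varpi, s)$ with $\varpi$ a uniformizer of $\mathcal O_{\mathfrak p}$ and $s$ a local equation of $\mathcal B$. By the structure theory of tame covers (Abhyankar, Grothendieck--Murre), after an \'etale base change the cover $\mathcal Y$ over this local ring is given by $z^e = u s$ with $u$ a unit, $e = \mathrm{ord}(\tau)$, and $\langle\tau\rangle$ acting by $z \mapsto \zeta_e z$. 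Pulling back along $\sigma$ replaces $s$ by an element of $\mathfrak p$-adic valuation exactly $e_i$, so the local specialization adjoins to $K_{\mathfrak p}$ an $e$-th root of $u'\varpi^{e_i}$ for a unit $u'$. As $\mathfrak p$ has residue characteristic prime to $e$, Abhyankar's lemma shows this extension has ramification index $e/\gcd(e,e_i)$; and writing $z = w^{e_i/\gcd(e,e_i)}\cdot(\text{unit})$ for a uniformizer $w$ of the specialization, one reads off that the tame inertia generator --- which scales $w$ by a primitive $(e/\gcd(e,e_i))$-th root of unity --- scales $z$ by $\zeta_e^{e_i}$, i.e.\ acts as $\tau^{e_i}$. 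Hence the inertia subgroup of $G$ at $\mathfrak p$ is conjugate to $\langle\tau^{e_i}\rangle$, whose order $e/\gcd(e,e_i)$ is indeed the ramification index found above.

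The main obstacle is not the closing Abhyankar computation but the uniform control of the exceptional set: one must exhibit a single finite $\mathcal S_0$, depending only on $N/K(t)$ and not on the specialization value $t_0$, for which the integral model simultaneously enjoys good reduction, a branch locus that stays finite \'etale over the base, tameness along that locus, and residue-field extensions unramified at $\mathfrak p$ (so that the latter contribute nothing to the inertia group). Checking that all these requirements can be met at once, and that the local structure theorem for tame covers applies at the relevant points of $\mathcal B$ --- which may be closed points carrying a nontrivial residue extension --- is the delicate part, and is exactly what \cite[Prop.\ 4.2]{Beck} carries out.
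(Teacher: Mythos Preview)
The paper does not actually prove this theorem; it merely states it as a known result and cites \cite[Prop.\ 4.2]{Beck} for the proof. Your sketch via spreading out to an integral model, reducing to the local structure of tame covers, and finishing with Abhyankar's lemma is the standard line of argument and is indeed in the spirit of Beckmann's original proof, so there is nothing substantive to compare against the paper itself.
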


Regarding the definition of intersection multiplicity $I_{\mathfrak{p}}$ occurring in Theorem \ref{thm:beck}, note that in the special case $K=\mathbb{Q}$ this may be defined conveniently in the following way: Let $f(X)\in \zz[X]$ be the irreducible polynomial of $t_i$ over $\zz$ and $\tilde{f}(X,Y)$ its homogenization (with $\tilde{f}:=Y$ in the special case $t_i=\infty)$. Let $t_0=\frac{a}{b}$ with $a,b\in \zz$ coprime, and let $p$ be a prime number. Then $I_p(t_0,t_i)$ is the multiplicity of $p$ in $\tilde{f}(a,b)$.

The following result (cf.\ \cite[Thm.\ 4.1]{KLN}) clarifies the structure of decomposition groups in specializations.
To state it, we need to introduce a bit more notation. Given a finite Galois extension $N/K(t)$ with group $G$ and a $K$-rational place $t\mapsto a\in \mathbb{P}^1(K)$, denote by $D_a$ and $I_a$ the decomposition and inertia group at the place $t\mapsto a$ respectively. Furthermore, given a prime $p$ of $K$, denote by $D_{a,p}$ the decomposition group at $p$ in the residue extension $N_a/K$. Note that the Galois group of $N_a/K$ is identified with $D_a/I_a$, and hence $D_{a,p}$ is identified with a subgroup of this group.

\begin{theorem}
\label{thm:kln}
Let $K$ be a number field and $N/K(t)$ a finite $K$-regular Galois extension with Galois group $G$.
Let $t\mapsto t_i \in \mathbb{P}^1(\overline{K})$ be a branch point of $N/K(t)$, and let $I_{t_i}$ and $D_{t_i}$ denote the inertia and decomposition group at $t\mapsto t_i$ in $N(t_i)/K(t_i)(t)$.
Then there exists a finite set $\mathcal{S}_1$ of primes of $K$, depending only on $N/K(t)$ and containing the set $\mathcal{S}_0$ from Theorem \ref{thm:beck}, such that for all primes $\mathfrak{p}$ of $K$ not in $\mathcal{S}_1$ and all non-branch points $a\in \mathbb{P}^1(K)$ of $N/K(t)$, the following hold:

\begin{itemize}
\item[a)] Assume that $\mathfrak{p}$ ramifies in $N_a/K$ and $I_{\mathfrak{p}}(a,t_i)>0$.\footnote{The latter is guaranteed to happen for some branch point $t_i$, unique up to algebraic conjugates, by Theorem \ref{thm:beck}.} 
Let $D_{t_i,\mathfrak{p}'}$ denote the decomposition group of $N(t_i)_{t_i}/K(t_i)$ at the (unique) %
 prime $\mathfrak{p}'$ of $K(t_i)$ extending $\mathfrak{p} $ such that $I_{\mathfrak{p}'}(a,t_i)>0$. Then the decomposition group $D_{a,\mathfrak{p}}$ at $\mathfrak{p}$ in $N_a/K$ is identified (up to conjugacy in $G$) with a subgroup of $D_{t_i}$, and fulfills $\varphi(D_{a,\mathfrak{p}}) = D_{t_i,\mathfrak{p}'}$, where $\varphi:D_{t_i}\to D_{t_i}/I_{t_i}$ is the canonical epimorphism.
\item[b)] In particular, if additionally $I_{\mathfrak{p}}(a,t_i) = 1$, then $D_{a,\mathfrak{p}} = \varphi^{-1}(D_{t_i,\mathfrak{p}'})$.
\end{itemize}
\end{theorem}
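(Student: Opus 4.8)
The final statement to prove is Theorem \ref{thm:kln}, which describes the decomposition group $D_{a,\mathfrak{p}}$ of a prime $\mathfrak{p}$ in a specialization $N_a/K$ of a $K$-regular Galois extension $N/K(t)$, in terms of the decomposition group $D_{t_i,\mathfrak{p}'}$ of the associated ``specialization at the branch point'' $N(t_i)_{t_i}/K(t_i)$. The plan is to reduce to the study of the local extension at $\mathfrak{p}$ by completing, and to transport the known description of inertia (Theorem \ref{thm:beck}) to a description of the full decomposition group by a careful bookkeeping of Frobenius elements. Throughout, we enlarge the exceptional set $\mathcal{S}_0$ from Theorem \ref{thm:beck} to a larger finite set $\mathcal{S}_1$ so as to be able to invoke good reduction of the relevant covers modulo $\mathfrak{p}$: concretely, we want $\mathfrak{p}$ to be unramified in $N(t_i)/\mathbb{Q}$ (as a cover and a constant extension), to have the branch locus of $N/K(t)$ remain étale of the right shape mod $\mathfrak{p}$, and to avoid the finitely many $\mathfrak{p}$ dividing the relevant discriminants/resultants.

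**Step 1: Pass to completions and to a model over the residue field.** Fix $\mathfrak{p}\notin\mathcal{S}_1$ and $a\in\mathbb{P}^1(K)$ a non-branch point with $I_{\mathfrak{p}}(a,t_i)=e_i>0$ for the (unique up to conjugacy) branch point $t_i$. Work over $K_{\mathfrak{p}}$: the specialization $N_a\otimes_K K_{\mathfrak{p}}$ is governed by how the place $t\mapsto a$ of $K_{\mathfrak{p}}(t)$ sits relative to the branch points. The condition $I_{\mathfrak{p}}(a,t_i)=e_i$ says precisely that, after a $K_{\mathfrak{p}}$-rational change of coordinates putting $t_i$ at $0$, we have $v_{\mathfrak{p}}(a)=e_i$ (when $t_i$ is $K_{\mathfrak{p}}$-rational; in general one first base-changes to $K(t_i)$ and picks the prime $\mathfrak{p}'$ over $\mathfrak{p}$ with $I_{\mathfrak{p}'}(a,t_i)>0$, which is exactly how the theorem is phrased). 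This is the standard ``good reduction / stable reduction at a branch point'' picture: the specialization at $a$ looks locally like the cover $N(t_i)_{t_i}/K(t_i)$ of the branch fiber, pulled back along the map $t\mapsto t$ of ramification index $e_i$.

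**Step 2: Identify inertia and then Frobenius.** By Theorem \ref{thm:beck}, the inertia group at a prime above $\mathfrak{p}$ in $N_a/K$ is conjugate to $\langle\tau^{e_i}\rangle$ where $\tau$ generates an inertia group over $t_i$; this is the ``known'' part. The new content is the residual/unramified part. The key observation is that reduction mod $\mathfrak{p}$ identifies the maximal unramified subextension of $(N_a)_{\mathfrak{p}}/K_{\mathfrak{p}}$ with the residue extension coming from $N(t_i)_{t_i}/K(t_i)$: indeed, a place of $N$ above $t\mapsto a$ and above $\mathfrak{p}$ reduces mod $\mathfrak{p}$ to a place of the reduction $\overline{N}$ of $N$ (which has good reduction by choice of $\mathcal{S}_1$), lying over the reduction $\bar a$ of $a$; since $v_{\mathfrak{p}}(a)=e_i>0$, the point $\bar a$ is the branch point $\bar t_i$, and the residue extension there is by definition $\overline{N(t_i)_{t_i}}/\overline{K(t_i)}$. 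Tracking the canonical epimorphism $\varphi\colon D_{t_i}\to D_{t_i}/I_{t_i}$, which is exactly the map that on the specialization side collapses the (tame) inertia $\langle\tau\rangle$ coming from the branch point, one gets $\varphi(D_{a,\mathfrak{p}})=D_{t_i,\mathfrak{p}'}$; and when $e_i=1$ there is no inertia contribution at all from the branch point, so the full decomposition group is recovered, giving $D_{a,\mathfrak{p}}=\varphi^{-1}(D_{t_i,\mathfrak{p}'})$. Part (b) then follows from part (a) together with the inertia statement of Theorem \ref{thm:beck} (with $e_i=1$ the inertia is $\langle\tau\rangle=I_{t_i}=\Ker\varphi$, so $\varphi^{-1}$ of the image is the whole group).

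**Main obstacle.** The delicate point — and the step I expect to take real work — is making the ``reduction mod $\mathfrak{p}$ sends the place over $a$ to the place over $\bar t_i$, compatibly with decomposition groups'' argument rigorous and uniform in $\mathfrak{p}$. This requires (i) choosing $\mathcal{S}_1$ so that $N/K(t)$, together with its branch divisor and the extension $N(t_i)/K(t_i)$, all have good reduction mod $\mathfrak{p}$ in a strong enough sense (no coincidence of branch points mod $\mathfrak{p}$, the constant field $K(t_i)$ unramified at $\mathfrak{p}$, etc.), and (ii) verifying that the decomposition group of the reduced place really does compute the Frobenius of the $\mathfrak{p}$-adic specialization — i.e., that no ``hidden'' residual extension creeps in from the $e_i$-fold ramification. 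The cleanest route is to use the explicit local description: over $K_{\mathfrak{p}}$, the completion of $N_a$ at a prime over $\mathfrak{p}$ is the splitting behavior of $N(t_i)\otimes_{K(t_i)} K(t_i)((s))$ pulled back along $s\mapsto$ (uniformizer)$^{1/e_i}$, whose unramified part is manifestly the residue extension $N(t_i)_{t_i}$ localized at $\mathfrak{p}'$. Citing \cite[Thm.\ 4.1]{KLN} for the precise statement (as the excerpt does) lets us treat this as known; a self-contained proof would need the stable-reduction bookkeeping above. I would present the proof in the second form, deriving it from Theorem \ref{thm:beck} plus the good-reduction setup, with the one genuinely nontrivial lemma being the compatibility of reduction with decomposition groups at the branch fiber.
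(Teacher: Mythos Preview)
The paper does not actually prove this theorem: it is quoted as a prerequisite, with the attribution ``cf.\ \cite[Thm.\ 4.1]{KLN}'' and no argument beyond that citation. So there is no ``paper's own proof'' to compare against; the intended answer here is simply to cite \cite{KLN}.

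That said, your sketch is a reasonable outline of the argument that underlies \cite[Thm.\ 4.1]{KLN}: enlarge the exceptional set so that the cover and its branch locus have good reduction, complete at $\mathfrak{p}$, and read off the unramified part of the local specialization from the residue extension at the reduced branch point. You correctly identify the one genuinely delicate point, namely the compatibility of reduction with decomposition groups at the branch fiber, and you correctly note that the case $e_i=1$ follows from part (a) combined with the inertia description of Theorem \ref{thm:beck}. A couple of places are looser than a full proof would allow: the phrase ``when $e_i=1$ there is no inertia contribution at all from the branch point'' is not quite right (the inertia is then the full $I_{t_i}$, not trivial), and your handling of non-$K_{\mathfrak{p}}$-rational $t_i$ via $\mathfrak{p}'$ glosses over why the relevant prime of $K(t_i)$ is unique and unramified over $\mathfrak{p}$, which is part of what the enlarged $\mathcal{S}_1$ must guarantee. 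For a self-contained write-up you would want to spell out the good-reduction model more carefully; for the purposes of this paper, the citation suffices.
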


\begin{remark}
\label{rem:explicit}
\begin{itemize}
\item[a)]
The finite exceptional set $\mathcal{S}_1$ in Theorem \ref{thm:kln} can be made explicit, and this is sometimes relevant when doing calculations with concrete extensions, as we will do in Theorem \ref{thm:q7}. In order to avoid too much technicality, we only refer to \cite[Theorem 2.2]{KN21} for an effective version of Theorem \ref{thm:kln}.
\item[b)] For many applications, it is enough to obtain {\it some} specialization values $a\in \mathbb{P}^1(K)$ yielding a certain prescribed local behavior, rather than controlling the behavior for all specialization values. In this context, the notion of a {\it universally ramified prime} (see \cite{BSS} or \cite{KNS}) becomes useful: let $\mathcal{U}(N/K(t))$ be the set of all primes of $K$ ramifying in all specializations $N_a/K$ where $a\in \mathbb{P}^1(K)$ is a non-branch point of $N/K(t)$. Let $\mathcal{S}$ be any finite set of primes of $K$ disjoint from $\mathcal{U}(N/K(t))$. Then there exists a non-empty and $\mathcal{S}$-adically open set of values $a\in \mathbb{P}^1(K)$ such that $N_a/K$ is unramified at all $p\in \mathcal{S}$ (an easy consequence of Krasner's lemma). In particular, by choosing $\mathcal{S}:=\mathcal{S}_1 \setminus \mathcal{U}(N/K(t))$ with $\mathcal{S}_1$ as in Theorem \ref{thm:kln}, we see that for all such values $a$, the assertion of Theorem \ref{thm:kln} actually holds with $\mathcal{S}_1$ replaced by $\mathcal{U}(N/K(t))$, and the latter set is often much more accessible and can be bounded very effectively by simply comparing some very few specializations.
\end{itemize}
\end{remark}

We will also require the following lemma, describing the behavior of residue extensions under certain rational pullbacks.
\begin{lemma}
\label{lem:pb}
Let $N/K(t)$ be a $K$-regular Galois extension with group $G$, let $I$ and $D$ denote the inertia and decomposition group at the place  $t\mapsto 0$, and assume that the extension $1\to I \to D \to D/I \to 1$ splits. Then there exists $\alpha\in K^\times$ such the following holds:

If $e:=|I|$ and $u$ is such that $u^e = \alpha t$, then the extension $N(u)/K(u)$ is unramified at $u\mapsto 0$ with decomposition group isomorphic to $D/I$.
\end{lemma}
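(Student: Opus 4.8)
The plan is to descend to the completion at $t\mapsto 0$ and reduce the statement to the structure of tamely ramified extensions of the local field $K((t))$, the role of $\alpha$ being to turn a certain ramified subextension into a ``pure'' one. I would first complete $N/K(t)$ at a place above $t\mapsto 0$, obtaining a Galois extension $\widehat{N}_0/K((t))$ with group $D$ and inertia subgroup $I$; since $\mathrm{char}(K)=0$ all ramification is tame, so $I$ is cyclic of order $e=|I|$. Using the splitting hypothesis I would fix a complement $H\le D$ with $D=I\rtimes H$ and set $L:=\widehat{N}_0^{H}$, so that $\Gal(\widehat{N}_0/L)=H\cong D/I$. Since the inertia subgroup of $\widehat{N}_0/L$ equals $I\cap H=\{1\}$, the extension $\widehat{N}_0/L$ is unramified; consequently $L/K((t))$ is totally ramified of degree $[D:H]=e$, with residue field $K$.

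The key step is to show that $L\cong K((t))[X]/(X^e-\alpha t)$ for a suitable $\alpha\in K^\times$. To produce $\alpha$ I would choose a uniformizer $\pi$ of $L$; since $L/K((t))$ is totally ramified of degree $e$, the element $\pi^e/t$ is a unit of $\mathcal{O}_L$ whose residue lies in $K^\times$, and I would let $\alpha\in K^\times$ be any lift of that residue. Then $\pi^e/(\alpha t)\in 1+\mathfrak{m}_L$, and since the residue characteristic is $0$, Hensel's lemma provides $\rho\in 1+\mathfrak{m}_L$ with $\rho^e=\pi^e/(\alpha t)$; the element $\gamma:=\pi/\rho$ is again a uniformizer of $L$, satisfies $\gamma^e=\alpha t$, and generates $L$ over $K((t))$ (having $L$-valuation $1$). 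This proves the claim, and shows in passing that $X^e-\alpha t$ is irreducible over $K((t))$, hence over $K(t)$.

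With this $\alpha$, I would take $u$ to be a root of $X^e-\alpha t$; then $K(u)/K(t)$ has degree $e$ and is totally ramified over $t\mapsto 0$, with a unique place $u\mapsto 0$ above it. Embedding $K((u))$ into $\overline{K((t))}$ via $u\mapsto\gamma$ identifies the completion of $K(u)$ at $u\mapsto 0$ with $L\subseteq\widehat{N}_0$, so the completion of $N(u):=N\cdot K(u)$ at the corresponding place above $u\mapsto 0$ is $\widehat{N}_0\cdot L=\widehat{N}_0$. Hence the decomposition group there is $\Gal(\widehat{N}_0/L)=H\cong D/I$ and the inertia group is $I\cap H=\{1\}$, so that place is unramified; since $N(u)/K(u)$ is Galois, the same holds at every place above $u\mapsto 0$, which is the assertion.

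I expect the middle step to be the main obstacle: one must argue that the totally tamely ramified degree-$e$ extension $L/K((t))$ is defined by a \emph{pure} equation $X^e-\alpha t$ with a \emph{constant} $\alpha\in K^\times$ — not a nonconstant unit times a uniformizer, nor a different power. This is exactly where residue characteristic $0$ (for Hensel's lemma) and the total ramification of $L/K((t))$ (forcing the residue field of $L$ to be exactly $K$) come in; in particular one does not need $\mu_e\subseteq K$, only $\mu_e\subseteq N_0$, which is automatic here. The splitting hypothesis is used solely to supply the complement $H$, equivalently the unramified subextension $\widehat{N}_0/L$ that concentrates all the ramification of $\widehat{N}_0/K((t))$ into $L$.
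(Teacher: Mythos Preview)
Your argument is correct and follows essentially the same route as the paper: pass to the completion $\widehat{N}/K((t))$, take the fixed field of a complement to $I$ in $D$ to obtain a totally ramified degree-$e$ subextension, and identify that subextension as $K((\sqrt[e]{\alpha t}))$ for some $\alpha\in K^\times$. Your Hensel's lemma justification for this last identification is more explicit than the paper's (which simply asserts it), and your computation of the decomposition group at $u\mapsto 0$ via $\widehat{N}_0/L$ is slightly more direct than the paper's sandwich bound on the residue extension, but these are cosmetic differences rather than a different method.
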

\begin{proof}
Let $U$ be a complement to $I$ in $D$, let $\widehat{N}/K((t))$ denote the completion of $N/K(t)$ at $t\mapsto 0$, and $F/K$ the residue extension of $N/K(t)$ at $t\mapsto 0$. In particular, $Gal(\widehat{N}/K((t))) \cong D$, and $\zeta_e \in F$. 
Let $\widehat{N}^U$ denote the fixed field of $U$. Then $\widehat{N}^U/K((t))$ is totally ramified of degree $e$, and hence equal to $K((\sqrt[e]{\alpha t})) =:K((u))$ for a suitable $\alpha\in K$. Consequently, $\widehat{N} = F((u))$. Now firstly, since $K(u)/K(t)$ is totally ramified of index $e$ at $t\mapsto 0$, the extension  $E(u)/K(u)$ is unramified at $u\mapsto 0$. Furthermore, the residue extension $E(u)_0/K$ is necessarily contained in the compositum of $K(\zeta_e)((u))$ (i.e., the Galois closure of $K((u))/K((t))$) and $\widehat{N} = F((u))$; since $\zeta_e\in F$, this compositum is $F((u))$, and thus its residue extension is $F$. Conversely, the completion at $u\mapsto 0$ has to be of degree at least $|D|/[K(u):K(t)] = |D/I| = [F:K]$, yielding the assertion.
\end{proof}

\begin{remark}
The splitting condition in the assumption of Lemma \ref{lem:pb} is necessary. Indeed, an analog of Theorem \ref{thm:kln} (with the base field $K$ replaced by the function field $K(u)$) shows that the decomposition group at $u\mapsto 0$ in $N(u)/K(u)$ is a subgroup of $D$ projecting onto $D/I$. This observation is essentially the reason why we are so far unable to include the (smaller) group $G=SL_2(5)$ in Theorem \ref{thm:main}.
\end{remark}

\subsection{Embedding problems}
\label{sec:embed_basics}
We recall some basic terminology around embedding problems; cf., e.g., \cite[Chapter III.5]{NSW}.  
A {\it finite embedding problem} over a field $K$ is a pair $(\varphi:G_K\to G,\varepsilon:\tilde{G} \to G)$, where $\varphi$ is a (continuous) epimorphism from the absolute Galois group $G_K$ of $K$ onto $G$, 
and $\varepsilon$ is an epimorphism between finite groups $\tilde{G}$ and $G$ fitting in an exact sequence $1\to N\to \tilde{G}\to G\to 1$. The kernel $N=\ker(\varepsilon)$ is called the kernel of the embedding problem. An embedding problem is called {\it central} if $\ker(\varepsilon) \le Z(\tilde{G})$. 
A (continuous) homomorphism $\psi:G_K\to \tilde{G}$ is called a \textit{solution} to $(\varphi, \varepsilon)$ if the composition  $\varepsilon \circ \psi$ equals $\varphi$. 
In this case, the fixed field of $\ker(\psi)$ is called a {\it solution field} to the embedding problem.
A solution $\psi$ is called a \textit{proper solution} if it is surjective. In this case, the field extension of the solution field over $K$ has full Galois group $\tilde{G}$. 

If $K$ is a number field and $\mathfrak{p}$ is a prime of $K$, every embedding problem $(\varphi, \varepsilon)$ induces an associated {\it local embedding problem} $(\varphi_{\mathfrak{p}}, \varepsilon_{\mathfrak{p}})$ defined as follows: $\varphi_{\mathfrak{p}}$ is the restriction of $\varphi$ to $G_{K_{\mathfrak{p}}}$ (well defined up to fixing an embedding of $\overline{K}$ into $\overline{K_{\mathfrak{p}}}$), and $\varepsilon_{\mathfrak{p}}$ is the restriction of $\varepsilon$ to $\varepsilon^{-1}(G({\mathfrak{p}}))$, where $G(\mathfrak{p}) := \varphi_{\mathfrak{p}}(G_{K_{\mathfrak{p}}})$.

We require two well-known results on central embedding problems.
The first  (cf.\ \cite[Chapter IV, Cor.\ 10.2]{MM}) is a local global-principle for the solvability of such embedding problems, which is essentially a consequence of the local-global principle for Brauer embedding problems over number fields (\cite[Chapter IV, Cor.\ 7.8]{MM}).

\begin{proposition}
\label{lem:prime_kernel}
Let $\Gamma = C.G$ be a central extension of $G$ by a cyclic group $C$ of prime order and $\varepsilon:\Gamma\to G$ the canonical projection.
Let $\varphi:G_{\mathbb{Q}}\to G$ be a continuous epimorphism. Then the embedding problem 
 $(\varphi, \varepsilon)$ is solvable if and only if all associated local embedding problems $(\varphi_p, \varepsilon_p)$ are solvable, where $p$ runs through all primes of $\mathbb{Q}$ (including the infinite one).
\end{proposition}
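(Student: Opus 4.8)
\emph{Proof proposal.} I would translate both the global and the local solvability into the vanishing of a single cohomology class, and then invoke the Hasse principle for Brauer groups. Write $\ell:=|C|$, a prime. Since $\varepsilon\colon\Gamma\to G$ is a central extension by $C$, the group $G$ — and hence $G_{\mathbb{Q}}$ via $\varphi$ — acts trivially on $C$, and $\Gamma$ is classified by a class $c\in H^2(G,C)$. By the standard theory of embedding problems (e.g.\ \cite[Chapter IV]{MM}), $(\varphi,\varepsilon)$ is solvable if and only if the inflated class $\omega:=\varphi^*(c)\in H^2(G_{\mathbb{Q}},C)$ vanishes; similarly, for each prime $p$ (finite or infinite) the local embedding problem $(\varphi_p,\varepsilon_p)$ — whose kernel is again $C$, now central in $\varepsilon^{-1}(G(\mathfrak p))$ — is solvable if and only if $\mathrm{res}_p(\omega)=0$ in $H^2(G_{\mathbb{Q}_p},C)$. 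The point here is that the composite $G_{\mathbb{Q}_p}\xrightarrow{\varphi_p}G(\mathfrak p)\hookrightarrow G$ is just the restriction of $\varphi$ to $G_{\mathbb{Q}_p}$, so the obstruction to the local problem is $\varphi_p^*\bigl(c|_{G(\mathfrak p)}\bigr)=\mathrm{res}_p(\omega)$. With this dictionary in place, the implication ``globally solvable $\Rightarrow$ locally solvable everywhere'' is trivial, and what remains is the converse: assuming $\mathrm{res}_p(\omega)=0$ for all $p$, to conclude $\omega=0$.

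For the converse I would restrict to $G_F$ with $F:=\mathbb{Q}(\zeta_\ell)$. Over $F$ the mod-$\ell$ cyclotomic character is trivial, so the trivial $G_F$-module $C$ is isomorphic to $\mu_\ell$; hence $\mathrm{res}^{G_{\mathbb{Q}}}_{G_F}(\omega)$ may be regarded as an element of $H^2(G_F,\mu_\ell)$, which by Kummer theory and Hilbert~90 equals $\mathrm{Br}(F)[\ell]$. Restriction is compatible with further restriction to completions, and every place $w\mid p$ of $F$ factors $\mathrm{res}_{G_F}$ through $\mathrm{res}_{G_{\mathbb{Q}_p}}$, so the hypothesis forces $\mathrm{res}_{G_F}(\omega)$ to be locally trivial at every place of $F$. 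By the Albert--Brauer--Hasse--Noether theorem, a Brauer class over a number field that is everywhere locally trivial is zero, so $\mathrm{res}_{G_F}(\omega)=0$. Finally a transfer argument recovers $\omega$ itself: $\mathrm{cor}\circ\mathrm{res}$ is multiplication by $[F:\mathbb{Q}]=\ell-1$ on $H^2(G_{\mathbb{Q}},C)$, while $\omega$ is $\ell$-torsion and $\gcd(\ell-1,\ell)=1$, so $\omega=0$. (For $\ell=2$ this last step is vacuous, since then $F=\mathbb{Q}$ and $C=\mu_2$ already.) Hence the global embedding problem is solvable.

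The step I expect to be most delicate is the bookkeeping of the first paragraph rather than any estimate: one must be sure that ``solvable'' is read in the weak sense of the statement (so that the obstruction-class criterion applies, with the set of solutions forming an $H^1(G_{\mathbb{Q}},C)$-torsor once nonempty), and that the local embedding problem, whose base group is the decomposition group $G(\mathfrak p)$ and not all of $G$, genuinely has obstruction $\mathrm{res}_p(\omega)$; the independence of all this from the choice of embedding $\overline{\mathbb{Q}}\hookrightarrow\overline{\mathbb{Q}_p}$ also needs a word. Everything after that is the classical Hasse principle for the Brauer group together with an elementary corestriction computation. As an alternative to the $\mathbb{Q}(\zeta_\ell)$-descent, one could run the converse directly through Poitou--Tate duality, identifying the kernel of the localization map on $H^2(G_{\mathbb{Q}},C)$ with the Pontryagin dual of the group of $a\in\mathbb{Q}^\times/(\mathbb{Q}^\times)^\ell$ that are $\ell$-th powers in every completion of $\mathbb{Q}$, and observing that this group vanishes by a direct Chebotarev argument (no Grunwald--Wang phenomenon intervenes at the level of $\ell$-th powers).
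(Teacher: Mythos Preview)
Your proof is correct and follows precisely the route the paper indicates: the paper does not give its own proof of this proposition but cites \cite[Chapter IV, Cor.\ 10.2]{MM}, noting that it is ``essentially a consequence of the local-global principle for Brauer embedding problems over number fields'' \cite[Chapter IV, Cor.\ 7.8]{MM}. Your argument --- identifying the obstruction as $\varphi^*(c)\in H^2(G_{\mathbb{Q}},C)$, passing to $\mathbb{Q}(\zeta_\ell)$ to land in $\mathrm{Br}(\mathbb{Q}(\zeta_\ell))[\ell]$, invoking Albert--Brauer--Hasse--Noether, and recovering the global vanishing via $\mathrm{cor}\circ\mathrm{res}=\ell-1$ --- is exactly the standard unpacking of that citation.
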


Since the local embedding problem at an unramified prime is always solvable, the above lemma gives an efficient method to check for global solvability of a given embedding problem by investigating only the finitely many ramified primes of $K/\mathbb{Q}$.

The second result (see \cite[Prop.\ 2.1.7]{Serre}) is a tool to control ramification in solutions of central embedding problems, assuming solvability. \begin{proposition}
\label{prop:serre}
Let $\Gamma = C.G$ be a central extension of $G$ by a finite abelian group $C$, let $\varepsilon:\Gamma\to G$ be the canonical projection, and let $\varphi: G_{\mathbb{Q}}\to G$ be a continuous epimorphism such that the embedding problem $(\varphi,\varepsilon)$ has a solution. 
For each finite prime $p$, let $\tilde{\varphi_p}: G_{\mathbb{Q}_p}\to \Gamma$ be a solution of the associated local embedding problem $(\varphi_p,\epsilon_p)$, 
 chosen such that all but finitely many $\tilde{\varphi_p}$ are unramified. Then there exists a (not necessarily proper) solution $\tilde{\varphi}: G_{\mathbb{Q}}\to \Gamma$ of $(\varphi,\epsilon)$ such that for all finite primes $p$, the restrictions of $\tilde{\varphi}$ and $\tilde{\varphi_p}$ to the inertia group inside $G_{\mathbb{Q}_p}$ coincide. In particular, $\tilde{\varphi}$ is ramified exactly at those finite primes $p$ for which $\tilde{\varphi_p}$ is ramified.
\end{proposition}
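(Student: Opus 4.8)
The plan is to use the standard fact that, when it is nonempty, the solution set of a central embedding problem is a torsor under the group $\mathrm{Hom}(G_{\mathbb{Q}},C)$ of continuous characters (here $G$ acts trivially on $C$, $C$ being central), and then to realize the prescribed behavior on inertia by a single such global character — which over $\mathbb{Q}$ is made possible by class field theory.

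First I would fix a global solution $\tilde{\varphi}_0\colon G_{\mathbb{Q}}\to\Gamma$ of $(\varphi,\varepsilon)$, which exists by hypothesis. For each finite prime $p$, both $\tilde{\varphi}_p$ and the restriction $\tilde{\varphi}_0|_{G_{\mathbb{Q}_p}}$ solve the local embedding problem $(\varphi_p,\varepsilon_p)$, so the pointwise quotient $\chi_p:=\tilde{\varphi}_p\cdot(\tilde{\varphi}_0|_{G_{\mathbb{Q}_p}})^{-1}$ takes values in $C=\ker(\varepsilon)$; because $C$ is central, a short computation shows $\chi_p$ is in fact a continuous homomorphism $G_{\mathbb{Q}_p}\to C$. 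Since $\tilde{\varphi}_0$ ramifies at only finitely many primes and, by assumption, all but finitely many $\tilde{\varphi}_p$ are unramified, $\chi_p$ is trivial on the inertia subgroup $I_p\le G_{\mathbb{Q}_p}$ for all but finitely many $p$.

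Next I would assemble a single global character $\chi\colon G_{\mathbb{Q}}\to C$ with $\chi|_{I_p}=\chi_p|_{I_p}$ for every finite prime $p$. This is the one step where the base field matters: by Kronecker--Weber, $G_{\mathbb{Q}}^{\mathrm{ab}}\cong\Gal(\mathbb{Q}(\mu_\infty)/\mathbb{Q})\cong\prod_p\mathbb{Z}_p^\times$, and under this isomorphism the image of the inertia group at $p$ is precisely the factor $\mathbb{Z}_p^\times$. Each $\chi_p|_{I_p}$, being a homomorphism into an abelian group, factors through the image of $I_p$ in $G_{\mathbb{Q}_p}^{\mathrm{ab}}$, which local reciprocity identifies with $\mathbb{Z}_p^\times$; the product over all $p$ of the resulting characters of the factors is then a well-defined homomorphism $\prod_p\mathbb{Z}_p^\times\to C$, continuous since all but finitely many factors are trivial and $C$ is finite. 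Inflating to $G_{\mathbb{Q}}$ yields $\chi$.

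Finally I would set $\tilde{\varphi}:=\tilde{\varphi}_0\cdot\chi$ (pointwise product in $\Gamma$). Since $\chi$ has values in $C\le Z(\Gamma)$, $\tilde{\varphi}$ is a continuous homomorphism with $\varepsilon\circ\tilde{\varphi}=\varepsilon\circ\tilde{\varphi}_0=\varphi$, hence a solution of $(\varphi,\varepsilon)$; and for every finite prime $p$ one gets $\tilde{\varphi}|_{I_p}=(\tilde{\varphi}_0|_{I_p})\cdot(\chi_p|_{I_p})=\tilde{\varphi}_p|_{I_p}$, the asserted compatibility, whence also $\tilde{\varphi}(I_p)=\tilde{\varphi}_p(I_p)$ so that $\tilde{\varphi}$ ramifies exactly at the primes where $\tilde{\varphi}_p$ does. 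The whole argument is essentially formal once the torsor structure is set up; the only substantive input is the class-field-theoretic gluing of the local inertial characters into a global one, which is clean over $\mathbb{Q}$ — and I expect this is the step needing the most care if one ever wanted the analogue over a general number field, although prescribing behavior only on inertia (rather than on full decomposition groups) should keep Grunwald--Wang-type obstructions out of the picture.
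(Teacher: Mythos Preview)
Your argument is correct and is precisely the standard proof: the paper does not give its own proof but simply cites \cite[Prop.\ 2.1.7]{Serre}, and what you have written is essentially Serre's argument (torsor structure under $\mathrm{Hom}(G_{\mathbb{Q}},C)$, then gluing the prescribed inertial characters into a global one via $G_{\mathbb{Q}}^{\mathrm{ab}}\cong\prod_p\mathbb{Z}_p^\times$). The one point worth making explicit for the reader is the compatibility of local and global reciprocity ensuring that your globally assembled $\chi$ really restricts to $\chi_p|_{I_p}$ on each $I_p$, but you have the right ingredients.
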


\section{Proof of Theorem \ref{thm:main_weak}}
\begin{proof}
We first describe the general strategy. We will show the assertion by first constructing ``suitable" $PGL_2(p)$-extensions of $\mathbb{Q}$ and then embedding them into $2.PGL_2(p)$-extensions using Propositions \ref{lem:prime_kernel} and \ref{prop:serre}. Here, by $2.PGL_2(p)$, we mean the stem extension of $PGL_2(p)$ in which the ``outer" involutions (i.e., the one not contained in $PSL_2(p)$) split. By ``suitable" extensions, we mean extensions in which the intermediate extension of group $PSL_2(p)$ is unramified everywhere away from $p$, meaning that all nontrivial inertia groups other than possibly over $p$ must be generated by involutions outside of $PSL_2(p)$. There is only one class of such involutions in $PGL_2(p)$. The decomposition group $D$ at a prime with such an inertia subgroup must of course be contained in the centralizer of such an involution; this centralizer contains (e.g.) a subgroup $C_2\times C_2$, which is non-split in $2.PGL_2(p)$ and whose preimage is isomorphic to the dihedral group $D_4$. Thus, if in the given $PGL_2(p)$-extension, some prime $q$ had this decomposition and inertia subgroup, then assuming the existence of a lifting to $2.PGL_2(p)$, the decomposition group at $q$ would be $D_4$. Since however the ``outer" involution lifts to a (non-central) element of order $2$ in $D_4$, this would imply that the inertia subgroup in the $2.PGL_2(p)$-extension would be a non-normal of the decomposition group, a contradiction. In other words, a prime with decomposition group $C_2\times C_2$ would constitute a local obstruction to the solvability of the central embedding problem. In our construction below, we thus need to ensure among others (using Theorem \ref{thm:kln}) that such primes do not occur. As we will see, for primes with inertia group generated by an outer involution, the above is in fact the only local obstruction.

Due to the assumption $p\equiv 3$ mod $4$ and $p\not\equiv -1$ mod $24$, at least one of $2$ and $3$ is a non-square modulo $p$. Thus, by \cite[Chapter I, Cor.\ 8.10]{MM}, there exists a $\mathbb{Q}$-regular Galois extension $E/\mathbb{Q}(t)$ with Galois group $PGL_2(p)$, and with the following properties: $E$ is ramified at exaxtly three points $t_i\in \mathbb{P}^1(\overline{Q})$ ($i=1,2,3$), all $\mathbb{Q}$-rational, and the corresponding inertia groups are generated by elements of conjugacy class $pA$ (for $t_1)$, $4A$ or $6A$ (for $t_2$, depending on whether $2$ or $3$ is a non-square modulo $p$), and $2B$ (for $t_3$), in Atlas notation. Here $2B$ is the unique class of involutions outside of $PSL_2(p)$. Up to a fractional linear transformation in $t$, we may and will assume $t_1=0$, $t_2=\infty$ and $t_3=1$. Now consider the residue extensions at the branch points $t_1$ and $t_3$. 
Since $p\equiv 3$ mod $4$, the elements of class $2B$ are involutions with exactly two fixed points in the standard action of $PGL_2(p)$ on $p+1$ points. The centralizer of an element of class $2B$ in $PGL_2(p)$ is isomorphic to $C_2\times D_{(p-1)/2}$, where $D_{(p-1)/2}$ denotes the dihedral group of order $p-1$, and thus the residue extension at $t_1$ is of degree dividing $p-1$, with Galois group a subgroup of $D_{(p-1)/2}$. Assume that it is indeed of even degree. Then it has a unique quadratic subextension, which we denote by $\mathbb{Q}(\sqrt{\beta})/\mathbb{Q}$. Consider the extension $E(s)/\mathbb{Q}(s)$, where $s$ is such that $\beta t = s^2$. This is still a $\mathbb{Q}$-regular $PGL_2(p)$-extension (indeed, the quadratic extension $\mathbb{Q}(s)/\mathbb{Q}(t)$ has only one branch point in common with the quadratic subextension of $E/\mathbb{Q}(t)$), ramified at $s=0$, $s=\infty$ and $s=\pm\sqrt{\beta}$. In particular, the residue field at the latter two branch points ``swallows" the above quadratic subextension, whence the residue {\it extension} at any of those two branch points in $E(s)/\mathbb{Q}(s)$ is now cyclic of odd degree.

Now consider the residue extension at $t_1$.
Since the normalizer of a subgroup of order $p$ in $PGL_2(p)$ is isomorphic to $C_p\rtimes C_{p-1}$,  the residue extension at this branch point is of degree dividing $p-1$; and since furthermore, the residue extension at a branch point of ramification index $e$ must always contain the $e$-th roots of unity (e.g., \cite[Lemma 2.3]{KLN}), it follows that this residue extension in fact equals $\mathbb{Q}(\zeta_{p})/\mathbb{Q}$. Since $s\mapsto 0$ is the unique place over $t\mapsto 0$ in $E(s)$, the same holds for the residue extension of $E(s)/\mathbb{Q}(s)$ at $s=0$. It then follows from Lemma \ref{lem:pb} 
that for suitable $\alpha\in \mathbb{Q}$, the extension $E(u)/\mathbb{Q}(u)$, where $u$ is such that $s=\alpha u^p$, is $\mathbb{Q}$-regular with group $PGL_2(p)$, unramified at $u=0$ and with residue field extension still equal to $\mathbb{Q}(\zeta_p)/\mathbb{Q}$ (in particular, unramified outside of $p$ and $\infty$). 
Furthermore, at the branch points of $E(u)/\mathbb{Q}(u)$ extending $s=\pm \sqrt{\beta}$, the Galois group of the residue extension is guaranteed to inject into the one at $s=\pm \sqrt{\beta}$. This is because $\mathbb{Q}(u)/\mathbb{Q}(s)$ is unramified at those points, whence the inertia group $I$ at such a branch point in $E(u)/\mathbb{Q}(u)$ is the same as the corresponding one in $E(s)/\mathbb{Q}(s)$, and the decomposition group $D$ certainly cannot grow under pullback, implying the quotient of decomposition over inertia group cannot grow either.

Now from Theorem \ref{thm:kln}, we obtain the following: For all $u_0\in \mathbb{Q}$ of the form $u_0=\frac{u_1}{u_2^{6}}$ (with integers $u_1,u_2$)\footnote{The power in the denominator indeed only needs to be a multiple of the ramification index at $\infty$ in $E(u)/\mathbb{Q}(u)$, in order to avoid ramification induced by this branch point; by construction, this ramification index is either $2$ or $3$, depending on which of the two possible ramification types we started with.} and all but finitely many primes $q$, if the specialization $E(u)_{u_0}/\mathbb{Q}$ is ramified at $q$, then its inertia group in generated by an element of class $2B$ and its decomposition group embeds into $C_2\times C_{(p-1)/2} \equiv C_{p-1}$. In particular, no such prime yields a local obstruction to the embedding problem induced by $2.PGL_2(p)\to PGL_2(p)$, since the group elements of order $2B$, and a fortiori the odd order elements of $C_{(p-1)/2}$ are split in the central extension. To deal with the remaining finitely many primes $q$, it suffices to additionally choose $u_0$ $q$-adically sufficiently close to $u=0$. Indeed, since $E(u)_0/\mathbb{Q} = \mathbb{Q}(\zeta_p)/\mathbb{Q}$ is unramified away from $\{p,\infty\}$, Remark \ref{rem:explicit}b) yields that for $q\ne p$ finite, the extension $E(u)_{u_0}$ is then unramified at $q$; whereas for $q=p$, it is cyclic and totally ramified of degree $p-1$. Again, as above, the cyclic subgroups of order $p-1$ are split in the central extension. Finally, the infinite prime yields no local obstruction either, since specializing sufficiently close to $u=0$ (in the archimedean absolute value)\footnote{Note that this is possible within the set of values $u_0$ chosen above.} yields as complex conjugation an involution inside $C_{p-1}$, i.e., once again an element of class $2B$ in $PGL_2(p)$.

Via Proposition \ref{lem:prime_kernel}, we may thus solve the embedding problem induced by $G_\mathbb{Q} \to Gal(E(u)_{u_0}/\mathbb{Q})$ and $2.PGL_2(p)\to PGL_2(p)$, and due to Proposition \ref{prop:serre}, this can in fact be achieved without any ramification inside the kernel (since all non-trivial inertia groups were split). Thus, if $K$ denotes the fixed field of $SL_2(p)$ in the resulting extension $L/\mathbb{Q}$, then all inertia groups in $Gal(L/\mathbb{Q})$, except the ones at $p$, are generated by involutions outside of $Gal(L/K)$, whence $L/K$ is ramified only above $p$. This concludes the proof.
\end{proof}

\begin{remark}
The extensions of $\mathbb{Q}$ with Galois group $PGL_2(p)$ constructed in the above proof are in particular {\it locally cyclic}, i.e., all decomposition groups at primes of $\mathbb{Q}$ are cyclic. This property was previously studied in \cite{KK21}, and locally cyclic extensions were obtained, e.g., for the special case $G=PGL_2(7)$ (\cite[Theorem 5.5]{KK21}). Our above result (for infinitely many primes $p$) in particular improves on \cite[Theorem 2.1]{KK21}. 
\end{remark}

\section{Proof of Theorem \ref{thm:main}} 

We first prove \ref{thm:main} in a purely theoretical way, using only ramification data and group-theoretical properties. Afterwards, we will make the conclusion more concise using computations with some explicit polynomials.


\begin{proof}
In the following, we adapt the proof of Theorem \ref{thm:main_weak} somewhat for the special case $p=7$, in order to get rid of ramification at $p$. Note that since $3$ is a non-square modulo $7$, the extensions $E/\mathbb{Q}(t)$ used in the proof of Theorem \ref{thm:main_weak} are of ramification type $(pA, 6A, 2B) = (7A, 6A, 2B)$. After pullback by an extension $\mathbb{Q}(u)/\mathbb{Q}(t)$ of rational function fields as in the above proof, we obtain a $PGL_2(7)$-extension $E(u)/\mathbb{Q}(u)$ with the following ramification data: inertia group of order $3$ at $u=\infty$, and a total of $14$ branch points of ramification index $2$ and with residue extension of odd degree; furthermore, no ramification above $u\mapsto 0$. We specialize $u\mapsto u_0$ essentially as above, except that we will choose $u_0$ $7$-adically close to $u=\infty$ rather than to $u=0$. Note that the normalizer of a subgroup of order $3$ in $PGL_2(7)$ is isomorphic to $S_3\times C_2 \cong C_3 \rtimes (C_2\times C_2)$, whence the residue extension $E(u)_{\infty}/\mathbb{Q}$ is of degree dividing $4$, and on the other hand (due to the ramification index being $3$) contains $\mathbb{Q}(\zeta_3)$. 
Note furthermore that $\mathbb{Q}_7$ conains the third roots of unity, whence the completion of $E(u)\cdot \mathbb{Q}_7/\mathbb{Q}_7(u)$ at $u\mapsto \infty$ has Galois group embedding into the centralizer $C_{PGL_2(7)}(C_3)$,\footnote{Here we have used the general fact that the action of the decomposition group on an inertia subgroup of order $e$ is via the Galois action of the $e$-th cyclotomic extension; see, e.g., \cite[Lemma 2.1(1)]{KN21}.} and in particular the residue extension $E(u)_{\infty} \cdot \mathbb{Q}_7 / \mathbb{Q}_7$ is either trivial or generated by an involution centralizing $C_3$. Such an involution necessarily lies outside of $PSL_2(7)$. 

As before, Lemma \ref{lem:pb} yields that there exists $\gamma\in \mathbb{Q}$ such that for $v$ defined via $u=\gamma v^3$, the extension $E(v)/\mathbb{Q}(v)$ is still $\mathbb{Q}$-regular of group $PGL_2(7)$, unramified at $v=\infty$ and with residue extension $E(v)_\infty/\mathbb{Q}$ contained in the above residue extension $E(u)_\infty/\mathbb{Q}$.

Specialize $v\mapsto v_0$ $7$-adically close to $v=\infty$. Then by Krasner's lemma, the local behavior of $E(v)_{v_0}/\mathbb{Q}$ at $p=7$ is the same as that of $E(v)_\infty/\mathbb{Q}$. In particular, by the above, the completion $E(v)_0\cdot \mathbb{Q}_7/\mathbb{Q}_7$ is either trivial or quadratic with Galois group generated by an involution of class $2B$. As before, this constitutes no local obstruction to the central embedding problem with kernel $C_2$, and we thus obtain (for $p=7$) the conclusion of Theorem \ref{thm:main_weak} {\it without} the exception at $p$.
\end{proof}

\begin{remark}
Theorem \ref{thm:main_weak} and its proof explicitly include also the case $p=3$. We leave it as an exercise to the reader to adapt the argument in the proof of Theorem \ref{thm:main} above to regain the analogous (and already known) statement for $p=3$ (i.e., infinitely many quadratic number fields with {\it everywhere} unramified $SL_2(3)$-extensions). 
\end{remark}

In the following, we make the assertion of Theorem \ref{thm:main} more precise by giving a fully explicit family of quadratic number fields possessing unramified $SL_2(7)$-extensions. Theorem \ref{thm:q7} should be compared with previous results on explicit families of number fields with class number divisible by prescribed integers $n$ (see, e.g., \cite{CHKP} for an overview as well as new results).

\begin{theorem}
\label{thm:q7}
For all 
odd integers $t\in 2\mathbb{Z}+1$,  the quadratic number field 
 $\mathbb{Q}(\sqrt{-7\cdot (108 + 7^{10}t^6)})$ possesses an everywhere unramified $SL_2(7)$-extension. \end{theorem}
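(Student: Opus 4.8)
The plan is to make the construction from the proof of Theorem \ref{thm:main} (case $G = SL_2(7)$) completely explicit, tracing back through the chain of rational pullbacks to a concrete polynomial. First I would start from an explicit $PGL_2(7)$-polynomial of ramification type $(7A, 6A, 2B)$ over $\mathbb{Q}(t)$ — such families are available in the literature (Malle-type families, or the degree-$8$ representation of $PGL_2(7)$); the commented-out \texttt{magma} code at the top of the source suggests the author works with a model like $x^7(x+1) + \frac{1}{108}(7x^2 + x + 1)\cdot(\text{parameter})$, so I would take that as the base extension $E/\mathbb{Q}(t)$, with branch points normalized to $0, 1, \infty$ carrying inertia of orders $7$, $2$, $6$ respectively. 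The residue extension at the order-$7$ point is $\mathbb{Q}(\zeta_7)/\mathbb{Q}$ and at the order-$6$ point it sits inside $S_3 \times C_2$, containing $\mathbb{Q}(\zeta_3)$.

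Next I would carry out the two pullbacks prescribed in the proof. One is a quadratic pullback $s^2 = \beta t$ chosen (via Lemma \ref{lem:pb}-type reasoning and the structure of the residue field at the $2B$-point $t_3$) to make the residue extensions at the two preimages of the involution branch point of odd degree; this multiplies the number of $2B$-branch points to $14$. The other is a degree-$3$ pullback $u = \gamma v^3$ (combined with moving the order-$3$ point to $\infty$ and the order-$7$ point appropriately) chosen via Lemma \ref{lem:pb} so that $E(v)/\mathbb{Q}(v)$ is unramified at $v = \infty$ with residue extension contained in the (at most quadratic, generated by an outer involution over $\mathbb{Q}_7$) residue extension already computed. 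Composing these two substitutions and the linear fractional normalization, and then specializing $v \mapsto v_0$ along a family $v_0 = $ (suitable rational in $t$), one obtains a one-parameter family of degree-$8$ (or degree-$7$, in the point-stabilizer representation) polynomials over $\mathbb{Q}$. The claim is that the precise bookkeeping of constants yields exactly the specialization $x^3(x-1)^2 - 7^{?}t^{?}(\cdots)$ or equivalent, whose Galois group is $PGL_2(7)$ and whose discriminant — after removing square factors — equals $\pm 7 \cdot (108 + 7^{10} t^6)$ up to squares, so that the quadratic subfield of the $PSL_2(7)$-fixed field is exactly $\mathbb{Q}(\sqrt{-7(108 + 7^{10}t^6)})$.

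The final step is to verify, for the explicit family and all odd $t$, the local conditions that make the central embedding problem $2.PGL_2(7) \to PGL_2(7)$ solvable without extra ramification: namely that at every finite prime $q \ne 7$ ramifying in the specialized $PGL_2(7)$-extension, the inertia is generated by a $2B$-involution and the decomposition group is cyclic (embedding into $C_6$), that at $7$ the local extension is unramified or $2B$-quadratic, and that the archimedean place contributes only a $2B$-involution. The restriction to odd $t$ is presumably exactly what forces $v_0$ (equivalently the specialization point) to be $7$-adically close to $v = \infty$ and $2$-adically into the good neighbourhood guaranteed by Remark \ref{rem:explicit}b), i.e. odd $t$ lands in the $\mathcal{S}_1$-adic open set where Theorem \ref{thm:kln} applies with the universally ramified primes only. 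Once these local checks pass, Proposition \ref{lem:prime_kernel} gives solvability and Proposition \ref{prop:serre} gives a solution with no kernel ramification, so the $SL_2(7)$-extension over the quadratic field is everywhere unramified.

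The main obstacle I expect is the explicit discriminant computation and the verification that the square-free part works out uniformly to $-7(108 + 7^{10}t^6)$ for \emph{all} odd $t$ simultaneously, rather than just generically: one must control the contribution of each bad prime (especially $2$, $3$, $7$) to the discriminant of the maximal order, show it is a square away from the claimed factor, and confirm that no sporadic prime $t$ breaks the decomposition-group condition (the comment "(Still problem at $p=5$)" in the source shows such sporadic failures are a real danger in neighbouring constructions). This is where Remark \ref{rem:explicit}a) and the effective version of Theorem \ref{thm:kln} from \cite{KN21} are needed, to pin down the exceptional set $\mathcal{S}_1$ precisely and check it by hand against the family.
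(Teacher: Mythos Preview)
Your overall strategy matches the paper's --- start from an explicit $PGL_2(7)$-polynomial, perform rational pullbacks to tame the residue extensions at the $2B$-branch points, specialize, and check the bad primes by hand using the effective form of Theorem~\ref{thm:kln}. But several of your concrete details are off, and one substantive step is missing.

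First, the polynomial and pullback sequence. The paper works with the degree-$8$ model $f(t,X)=7^3X^7(X+7)-t(X^2+X+7)$ (a rescaled member of Malle's family), with branch points $0$, $\infty$, and $t_0=-7^{10}/108$ of inertia orders $7$, $6$, $2$. The residue extension at $t_0$ has quadratic subfield $\mathbb{Q}(\sqrt{-3})$, which happens to coincide with $\mathbb{Q}(\sqrt{t_0})$; hence the single substitution $t=1/s^2$ already makes the residue extension at the two $2B$-points $s=\pm 1/\sqrt{t_0}$ cyclic of order $3$, and simultaneously drops the inertia at $s=0$ (over $t=\infty$) to order $3$ by Abhyankar. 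A second pullback $s=u^3$ kills that order-$3$ point, leaving six $2B$-branch points and the $7$-cycle at $u=\infty$. The order-$7$ point is never killed: it is simply avoided by specializing at \emph{integers} $u_0$. Your ``$14$ branch points'' and the further cubic pullback near $v=\infty$ come from the abstract proof of Theorem~\ref{thm:main}, which the explicit version shortcuts; and $x^3(x-1)^2-\cdots$ is a degree-$5$ ($S_5$) polynomial from the scratch code, not the $PGL_2(7)$ model.

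Second, the local analysis at $7$. In the paper this is not handled by $7$-adic closeness to a branch point but by direct computation: the specialized polynomial $F(u_0,X)=(7u_0^2)^3X^7(X+7)-(X^2+X+7)$ is shown (via a change of variable over $\mathbb{Q}_7(\sqrt{7})$) to split completely there for every odd $u_0$, so the decomposition group at $7$ has order at most $2$ and is generated by a $2B$-involution. Oddness of $u_0$ is used precisely at $p=2$: $F(u_0,X)\bmod 2$ is separable for odd $u_0$, so $2$ is unramified.

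Third, a step you omit: one must show the Galois group equals the full $PGL_2(7)$ for \emph{every} odd $u_0$, not just generically. The paper combines two facts: (i) all inertia subgroups at ramified primes are generated by $2B$-involutions, hence so is the Galois group; and (ii) $F(u_0,X)$ is irreducible modulo $2$, forcing an $8$-cycle (Frobenius at $2$) into the group. No proper subgroup of $PGL_2(7)$ has both properties. Without this argument the embedding-problem conclusion could degenerate for particular $u_0$.
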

\begin{proof}
We consider the splitting field $E$ of the degree-$8$ polynomial $f(t,X) = 7^3X^7(X+7) - t(X^2+X+7)$ over $\mathbb{Q}(t)$. This has Galois group $G=PGL_2(7)$ over $\mathbb{Q}(t)$ (e.g., it is, up to multiplication of the variables by suitable constants, the special case $a=0$ of the polynomial in \cite[Theorem 5.1]{Malle}). Furthermore, $E/\mathbb{Q}(t)$ is ramified exactly  over $t\mapsto 0$, $t\mapsto \infty$ and one more rational point $t\mapsto t_0:=-7^{10}/(2^2\cdot 3^3)$, with inertia group generator of cycle structure $(7.1)$, $(6.1^2)$ and $(2^3.1^2)$ respectively. Following the above proofs, we manipulate $f$ in such a way that all ramification in specializations will be ``decided" by the branch point of ramification index $2$. We first find out about the residue extension at this branch point. Firstly, the decomposition group at this branch point needs to be contained in the centralizer of an element of cycle structure $(2^3.1^2)$ in $PGL_2(7)$, which is of order $12$. Thus the residue extension at $t\mapsto t_0$ is a Galois extension of group embedding into $C_{G}(I)/I \cong S_3$, and from factorizing $f(t_0,X)$ (which is the product of a quadratic polynomial and the square of a cubic polynomial), we find that it is indeed an $S_3$-extension, with quadratic subextension $\mathbb{Q}(\sqrt{-3})/\mathbb{Q}$. This happens to be the same quadratic extension as $\mathbb{Q}(\sqrt{t_0})/\mathbb{Q}$. Thus, setting $t(s):=1/s^2$, the splitting field $E(s)$ of $f(t(s),X)$ over $\mathbb{Q}(s)$ is a $PGL_2(7)$-extension ramified exactly over $s\in \{0,\infty, \pm \sqrt{t_0}\}$ (with ramification index $3$ at $s\mapsto 0$ due to Abhyankar's lemma, since this place extends $t\mapsto \infty$ of ramification index $2$), and now, the residue extension at $s\mapsto \pm \sqrt{1/t_0}$ is a $C_3$-extension of the base field $\mathbb{Q}(\sqrt{t_0})$. Pulling back further by setting $s(u):=u^3$, we obtain a $PGL_2(7)$=extension $E(u)/\mathbb{Q}(u)$ ramified only at the $6$-th roots of $1/t_0$, as well as $u\mapsto \infty$ (and in particular, unramified at $u\mapsto 0$ due to Abhyankar's lemma). Since the residue field $\mathbb{Q}(\sqrt[6]{t_0})$ contains $\mathbb{Q}(\sqrt{t_0}) = \mathbb{Q}(\sqrt{-3})$, it still remains true that the residue extension of $E(u)/\mathbb{Q}(u)$ at any of these finite branch points is a $C_3$-extension. Theorem \ref{thm:kln} then gives the following: There exists a finite set  $\mathcal{S}$ of primes such that, for any integral specialization $u\mapsto u_0\in \mathbb{Z}$ and any prime $p$ not contained in $\mathcal{S}$, if $p$ ramifies in $E(u)_{u_0}/\mathbb{Q}$, then its inertia group must be generated by an involution of cycle type $(2^3.1^2)$, and its decomposition group must be either $C_2$ or $C_2\times C_3$. Since both these groups split in $2.PGL_2(7)$, the corresponding central embedding problem then has no local obstruction coming from any finite prime $p$ outside $\mathcal{S}$. We now verify computationally that in fact the primes in the exceptional set $\mathcal{S}$ pose no obstruction either.
This is done by specializing $f(t(s(u)),X)$, which up to constant factor equals 
$$F(u,X) = (7u^2)^3X^7(X+7)-(X^2+X+7),$$ at odd integer values $u\mapsto u_0 \in 2\mathbb{Z}+1$. 
Calculating the discriminant of $F(u,X)$ with respect to $X$ and using \cite[Theorem 2.2]{KN21}, as indicated in Remark \ref{rem:explicit}a), one verifies that one may in fact take $\mathcal{S} = \{2,3,7, \infty\}$. Firstly note that $F(u_0,X)$ mod $2$ is separable for odd $u_0$, whence $2$ is unramified in all such specializations and thus poses no problem. Neither does the infinite prime, since our $PGL_2(7)$-extensions are such that the quadratic subextension is imaginary, i.e., complex conjugation is given by an involution of cycle structure $(2^3.1^2)$. For the remaining primes $p=3$ and $7$, it may be directly verified, aided by Magma, 
 that all specializations which ramify at $p$ (in the case of $p=7$, these are actually all specializations $u\mapsto u_0\in 2\mathbb{Z}+1$; in the case of $p=3$, ramification happens only when $3|u_0$) have cyclic decomposition group generated by an involution of cycle type $(2^3.1^2)$.\footnote{Note that, in order to strictly verify this claim, one of course need not do calculations for infinitely many specializations, since the behavior of the decomposition group is the same for all values $u_0$ which are sufficiently close to each other $p$-adically, by Krasner's lemma. Further tricks are available to ease the task, e.g., over $\mathbb{Q}_7(\sqrt{7})$, one may perform a change of variable $X\mapsto \frac{X}{\sqrt{7}u}$ and consider the polynomial $7u^2F(u,X/\sqrt{7}u) = X^7(X+7\sqrt{7}u) - (X^2+\sqrt{7}uX+49u^2)$, which, as already a low-precision calculation verifies, splits into distinct linear factors over $\mathbb{Q}_7(\sqrt{7})$.}

Note furthermore that, since no odd integer is a branch point of the $PGL_2(7)$-extension $E(u)/\mathbb{Q}(u)$, all specializations at such values are Galois extensions of $\mathbb{Q}$ with group a subgroup of $PGL_2(7)$. Since the Galois group is generated by the set of all inertia subgroups, we obtain a subgroup of $PGL_2(7)$ generated by involutions of cycle type $(2^3.1^2)$. Furthermore, the polynomial $F(1,X)$ is irreducible (of degree $8$) modulo $2$, whence the Galois group for all specializations at odd integers must contain an $8$-cycle (namely, the Frobenius at $2$)! One may now verify directly that $PGL_2(7)$ has no proper subgroup which is generated by involutions as above while also containing an $8$-cycle. Hence $Gal(E(u)_{u_0}/\mathbb{Q})=PGL_2(7)$ without exception. 
 Propositions \ref{lem:prime_kernel} and \ref{prop:serre} now yield that $(E(u))_{u_0}/\mathbb{Q}$ embeds into an extension $L/\mathbb{Q}$ with group $SL_2(7).2$, and if $F$ denotes the fixed field of $SL_2(7)$ in $L$, then $L/F$ can be chosen everywhere unramified. It remains to calculate the quadratic field $F$, which is just given by the square-root of the discriminant of $F(u_0,X)$ above. Computation with Magma confirms the shape claimed in the statement of the theorem.
\end{proof}

\begin{remark}
\label{rem:dirprod}
A logical next step would be to try to realize more general central extensions of $PSL_2(7)$ as Galois groups of unramified extensions of quadratic number fields, using, e.g., Theorem \ref{thm:q7}. This is easy for the direct product $SL_2(7)\times C_2$. Indeed, the values $-7\cdot (108 + 7^{10}t^6)$ featuring in the assertion of Theorem \ref{thm:q7} are all strictly divisible by $7$, and the factor ($108+7^{10}t^6$) is not a square (due to being too close to another square). Hence, the extension $\mathbb{Q}(\sqrt{-7\cdot (108 + 7^{10}t^6)}, \sqrt{-7})/\mathbb{Q}(\sqrt{-7\cdot (108 + 7^{10}t^6)})$ is an unramified quadratic extension (necessarily linearly disjoint from the $SL_2(7)$-extension constructed above, which has no quadratic subextensions). In comparison, the same question for the group $SL_2(7)\times C_3$ already seems challenging: quadratic number fields with class number divisible by $3$ can be parameterized effectively, but requiring them to simultaneously have the shape of Theorem \ref{thm:q7} leads to difficult diophantine conditions. E.g., it is easy to exhibit infinitely many quadratic number fields of the shape $\mathbb{Q}(\sqrt{-u(4u+27)})$ with class number divisible by $3$ (for a more general result, see \cite[Main Theorem]{KiMi}). 
Demanding those to simultaneously fall into the shape of Theorem \ref{thm:q7} amounts to finding (suitable) integral points on the conic $U(4U+27) - 7D V^2=0$, with $D=108 + 7^{10}t^6$ for some odd integer $t$, as in the Theorem. I do not know whether such points exist for infinitely many $D$ of the prescribed shape.\end{remark}

{\bf Acknowledgement}: I would like to thank Kwang-Seob Kim for many helpful discussions.

\end{document}